\newtheorem{thm}{Theorem}[section]
\newtheorem{cor}[thm]{Corollary}
\newtheorem{lem}[thm]{Lemma}
\newtheorem{prop}[thm]{Proposition}
\newcommand{\pf}{\noindent\begin {proof}}
\newcommand{\epf}{\end{proof}}
\def\bc{\begin{center}}
\def\ec{\end{center}}
\begin{document}

{\centerline{\large {\bf On Hard Lefschetz Conjecture on Lawson Homology$^{\ast}$}}}

\bigskip
\bigskip

{\centerline{{\large Ze Xu}}}
\bigskip
{\centerline{\it{Institute of Mathematics, Academy of Mathematics and System Science,}}}
{\centerline{\it{Chinese Academy of Sciences, Beijing 100190, China}}}
{\centerline{\it{Email: xuze@amss.ac.cn}}}

\bigskip

{\centerline{{\large January 26, 2011}}}

\bigskip

\begin{figure}[b]

{\small
\begin{tabular}{ll}
$^{\ast}$The author is partially supported by National Natural Foundation
of China (10871106).
\end{tabular}}
\end{figure}

{\bf Abstract:} Friedlander and Mazur proposed a conjecture of hard Lefschetz type on Lawson homology. We shall relate this conjecture to Suslin conjecture on Lawson homology. For abelian varieties, this conjecture is shown to be equivalent to a vanishing conjecture of Beauville type on Lawson homology. For symmetric products of curves, we show that this conjecture amounts to the vanishing conjecture of Beauville type for the Jacobians of the corresponding curves. As a consequence, Suslin conjecture holds for all symmetric products of curves with genus at most 2.

\section{Introduction}

In this note, all varieties are integral schemes of finite type over the complex numbers. If $X$ is a quasi-projective variety of dimension $n$, then the \emph{Lawson homology} of algebraic $p$-cycles on $X$ is defined as
$$L_{p}H_{k}(X):=\pi_{k-2p}(\mathcal{Z}_{p}(X))\ \text{for}\ k\geq 2p \geq 0,$$
where the group of $p$-cycles $\mathcal{Z}_{p}(X)$ is given the Chow topology ([14]). Naturally, one can also define the negative Lawson homology by the same formula in which case $\mathcal{Z}_{p}(X):=\mathcal{Z}_{0}(X\times \mathbb{A}^{-p}):=\mathcal{Z}_{0}(X\times \mathbb{P}^{-p})/\mathcal{Z}_{0}(X\times \mathbb{P}^{-p-1})$ for $p<0$ ([7,11,16]). Like other homology theories, Lawson homology has a cohomological counterpart, \emph{morphic cohomology}, which will be discussed in Section 4. From now on, we assume that Lawson homology and morphic cohomology are all with rational coefficients unless otherwise stated.

Now let $X$ be a smooth projective variety of dimension $n$ and $D$ an ample divisor on $X$. By abuse of notation, $D$ still denotes its class in $L_{n-1}H_{2n-2}(X)\cong NS(X)\otimes_{\mathbb{Z}}\mathbb{Q}$ ([5]). Friedlander and Mazur ([9, 1.3]) proposed  the following hard Lefschetz conjecture on Lawson homology.

\bigskip

{\bf Conjecture 1}\quad  {\it The multiplication map $D^{k-n}\cdot: L_{p}H_{k}(X)\longrightarrow L_{p+n-k}H_{2n-k}(X)$ is injective for $n<k\leq 2n$ (and $k\geq 2p$).}

\bigskip

Inspired by Fu's study on hard Lefschetz conjectures on Chow groups([10]), in this note we study the above conjecture and explain the relation with other conjectures on Lawson homology. Using an observation due to Beilinson([3]), we first show that part of this conjecture is equivalent to Suslin conjecture in Section 2. In Section 3, we prove that for abelian varieties, this conjecture is equivalent to an analogue of Beauville's vanishing conjecture on Chow groups with rational coefficients ([1]). The idea of the proof goes back to [2, Prop.5.11]. In Section 4, we show that for symmetric products of curves, this conjecture is equivalent to the vanishing conjecture of Beauville type for the Jacobians of the corresponding curves by the method in [13]. As a corollary, we get the validity of Suslin conjecture for symmetric products of curves with genus at most 2.

{\bf Acknowledgements}\quad I would like to thank Professor Baohua Fu for encouraging me to write this note, and for the  discussions and suggestions. I thank also Professor Wenchuan Hu for useful comments.
\section{Lawson Homology}

Let us first recall some  basic properties of Lawson homology.

It is known that there is a canonical isomorphism $L_{p}H_{k}(X)\cong H_{k}(X)$ for $p< 0$ ([7,11,16]). Note also that $L_{p}H_{k}(X)=0$, if $p>n$ or $k<2p$. For latter use, denote $LH(X):=L_{*}H_{*}(X)=:\bigoplus_{k\geq 2p,p\in \mathbb{Z}}L_{p}H_{k}(X).$ By a theorem of Friedlander ([5, Th.4.6]), $L_{p}H_{2p}(X)=\pi_{0}(\mathcal{Z}_{p}(X))$ is isomorphic to the group of algebraic $p$-cycles modulo algebraic equivalence. In this context, if $X$ is a smooth projective variety of dimension $n$ and $p, q\in \mathbb{Z}$, then there is an intersection pairing
$$\bullet: L_{p}H_{k}(X)\otimes L_{q}H_{l}(X)\rightarrow L_{p+q-n}H_{k+l-2n}(X)$$
which at the level of $\pi_{0}$ induces the usual intersection pairing on algebraic cycles modulo algebraic equivalence ([6]). This intersection product is graded-commutative and associative ([11, Prop.2.4]). There exists a canonical cycle map $\Phi_{p,k}: L_{p}H_{k}(X)\rightarrow H_{k}(X)$ which is compatible with pull back, push forward and intersection product. In general, it is very hard to compute Lawson homology explicitly. Only a few of cases are known.

Now for two smooth projective varieties $X, Y$ of dimension $n, m$ respectively, denote by $\text{Corr}_{d}(X,Y):=\text{CH}_{n+d}(X\times Y)$ the Chow group of algebraic cycles of dimension $n+d$ on $X\times Y$. By [11], each $\Gamma\in \text{Corr}_{d}(X\times Y)$ determines a homomorphism of Lawson homology $\Gamma_{*}: L_{p}H_{k}(X)\rightarrow L_{p+d}H_{k+2d}(Y)$ for $k\geq 2p$ and $p\in \mathbb{Z}.$ This will play an essential role in this note.

Turning to Conjecture 1, we have the following easy proposition.
\begin{prop}\label{prop1}
 Conjecture 1 holds for $p\leq 0$ and for $p\geq n-1$. In particular, Conjecture 1 holds for all smooth projective varieties of dimension at most 2.
\end{prop}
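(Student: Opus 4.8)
The plan is to deduce both ranges from the classical hard Lefschetz theorem on singular homology by means of the cycle map. Set $r=k-n>0$. Since the cycle map $\Phi$ is compatible with the intersection product and the ample class $D\in L_{n-1}H_{2n-2}(X)$ maps to the ample class $[D]\in H_{2n-2}(X)$, intersecting with $D^{r}$ on Lawson homology corresponds under $\Phi$ to the Lefschetz operator $[D]^{r}$ on singular homology. Hence for every admissible pair $(p,k)$ there is a commutative square
\begin{CD}
L_{p}H_{k}(X) @>{D^{r}\cdot}>> L_{p+n-k}H_{2n-k}(X)\\
@V{\Phi_{p,k}}VV @VV{\Phi}V\\
H_{k}(X) @>{[D]^{r}}>> H_{2n-k}(X)
\end{CD}
whose bottom arrow is an isomorphism for $k>n$. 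Therefore it suffices, in each case, to prove that the \emph{left} vertical cycle map $\Phi_{p,k}$ is injective; for then the composite $\Phi\circ(D^{r}\cdot)=[D]^{r}\circ\Phi_{p,k}$ is injective, forcing $D^{r}\cdot$ to be injective.

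For $p\le 0$ the map $\Phi_{p,k}$ is in fact an isomorphism: when $p<0$ this is the recalled identification $L_{p}H_{k}(X)\cong H_{k}(X)$, and when $p=0$ it is the Dold--Thom isomorphism $\pi_{k}(\mathcal{Z}_{0}(X))\cong H_{k}(X)$. The top arrow is then an isomorphism, a fortiori injective.

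For $p\ge n-1$ the constraints $p\le n$, $k\ge 2p$ and $n<k\le 2n$ leave only $p=n$ with $k=2n$, and $p=n-1$ with $2n-2\le k\le 2n$. In the first case $L_{n}H_{2n}(X)=\pi_{0}(\mathcal{Z}_{n}(X))$ is spanned by $[X]$ and $D^{n}\cdot[X]=(\deg_{D}X)\,[\mathrm{pt}]\neq 0$ since $D$ is ample, so the map is injective. In the second case I would invoke the known computation of the Lawson homology of divisors: $\Phi_{n-1,k}$ is an isomorphism onto $H_{k}(X)$ for $k\in\{2n-1,2n\}$, while for $k=2n-2$ (which occurs only when $n>2$) the group $L_{n-1}H_{2n-2}(X)\cong NS(X)\otimes_{\mathbb{Z}}\mathbb{Q}$ injects into $H_{2n-2}(X)$ as the space of N\'eron--Severi classes, algebraic and homological equivalence coinciding for divisors. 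Injectivity of $\Phi_{n-1,k}$ throughout the range then finishes this case via the diagram.

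The final assertion follows formally: for $n\le 2$ the interval $0<p<n-1$ contains no integer, so the two ranges $p\le 0$ and $p\ge n-1$ exhaust all admissible $p$, and Conjecture 1 holds for every curve and surface. The one step carrying genuine content is the injectivity of $\Phi_{n-1,k}$ for divisor classes; this is where I expect to rely on the structure theory of codimension-one cycle spaces rather than on formal manipulation, the remaining ingredients being the compatibility of $\Phi$ with intersection and classical hard Lefschetz.
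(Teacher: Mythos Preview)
Your proof is correct and follows essentially the same route as the paper's: both reduce to the classical hard Lefschetz theorem via the cycle-map square, invoking the identifications $L_{p}H_{k}(X)\cong H_{k}(X)$ for $p\le 0$ and the explicit structure of $L_{n-1}H_{*}(X)$ and $L_{n}H_{*}(X)$ from Friedlander's computation. The paper's argument is stated more tersely (it simply lists these Lawson groups and cites hard Lefschetz), whereas you spell out the diagram and the injectivity-of-$\Phi_{p,k}$ reduction, but the ingredients are identical.
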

\begin{proof}  This comes directly from the hard Lefschetz theorem for singular homology and the following known results
on Lawson homology:
if $p<0$, then $L_{p}H_{k}(X)\cong H_{k}(X)$ ([7,11,16]). If $p=0,$ $L_{p}H_{k}(X)=H_{k}(X).$ By [5, Th.4.6], $L_{n-1}H_{k}(X)\cong\mathbb{Q}$ (resp. $H_{2n-1}(X,\mathbb{Q}), NS(X)\otimes_{\mathbb{Z}}\mathbb{Q}$ and 0) if $k=2n$ (resp. $2n-1, 2n-2$ and $k>2n$). If $p=n$, the only nontrivial Lawson homology is $L_{n}H_{2n}(X))\cong \mathbb{Q}$ ([5]).
\end{proof}

Suslin conjecture on Lawson homology (see for example [7]) asserts that for a smooth projective variety $X$ of dimension $n$, the cycle map $\Phi_{p,k}: L_{p}H_{k}(X)\rightarrow H_{k}(X)$ is bijective if $k\geq p+n$. Note that the above statement is a weak form of the original form of Suslin conjecture ([7, 7.9]). In relation with Conjecture 1, we have the following

\begin{prop}\label{prop2}For a smooth projective variety $X$ of dimension $n$, Conjecture 1 implies Suslin conjecture. Conversely, Suslin conjecture implies Conjecture 1 for $k\geq p+n.$
\end{prop}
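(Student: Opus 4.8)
The plan is to read both implications off a single commutative square coming from the compatibility of the cycle map with the intersection product. For $n<k\le 2n$ and $k\ge 2p$, intersection with $D^{k-n}$ acts on both Lawson and singular homology, and $\Phi$ intertwines the two, giving
\[
\begin{CD}
L_pH_k(X) @>{D^{k-n}}>> L_{p+n-k}H_{2n-k}(X)\\
@V{\Phi_{p,k}}VV @VV{\Phi_{p+n-k,2n-k}}V\\
H_k(X) @>{D^{k-n}}>> H_{2n-k}(X).
\end{CD}
\]
The bottom arrow is the classical hard Lefschetz \emph{isomorphism}, available precisely because $n<k\le 2n$. The observation that drives everything is that in the Suslin range $k\ge p+n$ the upper-right index obeys $p+n-k\le 0$, so by the facts recalled in the proof of Proposition~\ref{prop1} (namely $L_qH_\ell\cong H_\ell$ for $q<0$ and $=H_\ell$ for $q=0$) the right-hand cycle map $\Phi_{p+n-k,2n-k}$ is an isomorphism. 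Throughout one reduces to $1\le p\le k-n$, hence $n<k\le 2n$, since for $p\le 0$ the map $\Phi_{p,k}$ is already an isomorphism and Suslin is trivial there.

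For the converse (Suslin $\Rightarrow$ Conjecture 1 in the range $k\ge p+n$) the square finishes at once: Suslin makes $\Phi_{p,k}$ an isomorphism, so three of the four maps in the square — both verticals and the bottom — are isomorphisms, forcing the top map $D^{k-n}\colon L_pH_k(X)\to L_{p+n-k}H_{2n-k}(X)$ to be an isomorphism, in particular injective. For the forward direction, Conjecture 1 gives injectivity of this same top map; composing with the isomorphism $\Phi_{p+n-k,2n-k}$ shows that $D^{k-n}\circ\Phi_{p,k}$ is injective, whence $\Phi_{p,k}$ is injective. So the injectivity half of Suslin drops out cleanly.

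What remains, and where I expect the real difficulty to lie, is the \emph{surjectivity} of $\Phi_{p,k}$ in the forward direction. Chasing the same square shows that $\Phi_{p,k}$ is surjective if and only if the top Lefschetz map $D^{k-n}\colon L_pH_k(X)\to L_{p+n-k}H_{2n-k}(X)$ is surjective; but Conjecture 1 only asserts injectivity, and, unlike singular homology, Lawson homology carries no Poincaré duality, so there is no dimension count turning injectivity into surjectivity for free. My plan to close this gap would be to exploit the intersection pairing
\[
\bullet\colon L_pH_k(X)\otimes L_{n-p}H_{2n-k}(X)\longrightarrow L_0H_0(X)=\mathbb{Q},
\]
which is compatible through $\Phi$ with the perfect Poincaré pairing on $H_*(X)$; if this Lawson pairing were nondegenerate, the injectivity of $\Phi_{p,k}$ already established would force its surjectivity. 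The genuine obstacle is that such nondegeneracy is tantamount to surjectivity of the complementary cycle map $\Phi_{n-p,2n-k}$, whose index $2n-k<n$ lies outside the Suslin range and is controlled by neither conjecture. Thus the surjectivity step is exactly the point that must draw on more than the square alone, and is where the weight of the argument — and its dependence on the precise form of Suslin's conjecture — is concentrated.
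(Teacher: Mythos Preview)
Your diagram, the converse direction, and the injectivity half of the forward direction all match the paper exactly. You have also correctly isolated the genuine gap: surjectivity of $\Phi_{p,k}$ does not follow from the square alone, and your proposed pairing argument fails for precisely the reason you state.

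The missing ingredient, which the paper supplies, is external input of an arithmetic nature: Beilinson's observation that the Suslin conjecture with \emph{finite} coefficients is a consequence of the Milnor--Bloch--Kato conjecture (now a theorem). Once $\Phi_{p,k}$ is known to be an isomorphism with $\mathbb{Z}/m$-coefficients for every $m$, the integral Suslin statement becomes equivalent to the assertion that $L_pH_k(X,\mathbb{Z})$ is finitely generated for $k\ge p+n$, and this in turn is equivalent to the rational statement. Your injectivity result already forces $L_pH_k(X,\mathbb{Q})$ to embed in the finite-dimensional space $H_k(X,\mathbb{Q})$, which is what is needed to close the loop. In short, the surjectivity is not squeezed out of the Lefschetz square or any nondegeneracy of a Lawson pairing; it is imported from the torsion-coefficient theorem. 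Without invoking that theorem (or something of comparable strength), the forward implication cannot be completed, so as written your proposal stops short of a proof.
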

\begin{proof}Assume Conjecture 1 holds. We have the following commutative diagram
$$\xymatrix{
  L_{p}H_{k}(X) \ar[d]_{\Phi_{p,k}} \ar[rr]^{D^{k-n}\cdot} && L_{p+n-k}H_{2n-k}(X) \ar[d]^{\Phi_{p+n-k,2n-k}} \\
  H_{k}(X) \ar[rr]^{D^{k-n}\cdot } && H_{2n-k}(X).}
$$
The bottom arrow is an isomorphism by hard Lefschetz theorem for singular homology. Note that the cycle map $\Phi_{p,k}$ is an isomorphism if $p<0$. Now it is easy to see the injectivity of $\Phi_{p,k}$ for $k\geq p+n$. Since Suslin conjecture with finite coefficients is true by Milnor-Bloch-Kato conjecture (for a proof, see ([3, p.5]), Suslin conjecture with $\mathbb{Z}$-coefficients is equivalent to the assertion that the Lawson homology $L_{p}H_{k}(X,\mathbb{Z})$ is finitely generated for $k\geq p+n$. Therefore, Suslin conjecture with $\mathbb{Z}$-coefficients amounts to that with $\mathbb{Q}$-coefficients, which is true by the injectivity of the cycle map $\Phi_{p,k}$ for $k\geq p+n$.

The second statement follows easily from the above commutative diagram.
\end{proof}

\section{Abelian Varieties}

Now let $A$ be an abelian variety of dimension $g$ and $\ell\in \text{CH}^{1}(A\times \widehat{A})$ the class of the Poincar\'{e} bundle. Similarly to [1], we have the Fourier transform on Lawson homology $\mathscr{F}=e^{\ell}_{*}: L_{p}H_{k}(A)\rightarrow \bigoplus_{i=p}^{[g+\frac{k}{2}]} L_{p+g-i}H_{k+2g-2i}(\widehat{A})$. For $s\in \mathbb{Z}$, define $L_{p}H_{k}(A)_{(s)}=\{\alpha\in L_{p}H_{k}(A)|m^{*}\alpha=m^{2g-k-s}\alpha\ \text{for all}\ m\in\mathbb{Z}\}$. Then $L_{p}H_{k}(A)=\bigoplus _{s=p-k}^{[g-\frac{k}{2}]}L_{p}H_{k}(A)_{(s)}$ ([12]). Similarly to the situation of Chow groups, $L_{p}H_{k}(A)_{(s)}=(\pi_{2g-k-s})_{*}L_{p}H_{k}(A)_{(s)}$ where $\pi_{i}$ are the canonical Chow-K\"{u}nneth projectors ([4]) of $A$, since $\pi_{i}|_{L_{p}H_{k}(A)_{(s)}}$ is equal to the identity if $i=2g-k-s$ and  to 0 otherwise ([15]). Note that we have an intersection pairing $\bullet:¡¡L_{p}H_{k}(A)_{(s)}\otimes L_{q}H_{l}(A)_{(t)}\rightarrow L_{p+q-g}H_{k+l-2g}(A)_{(s+t)}$. The Fourier transform gives an isomorphism
$$\mathscr{F}: L_{p}H_{k}(A)_{(s)}\simeq L_{p+g-k-s}H_{2g-k-2s}(\widehat{A})_{(s)}$$ if $p-k\leq s\leq [g-\frac{k}{2}].$

In [12], Hu proposed the following conjecture which is an analogue of Beauville's vanishing conjecture on Chow groups ([1]).

\bigskip

{\bf Conjecture 2}\quad {\it For an abelian variety $A$, if $s<0,$ then $L_{p}H_{k}(A)_{(s)}=0$.}

\bigskip

Immediately, we have the following

\bigskip

{\bf Corollary 1}\quad {\it For an abelian variety $A$, Conjecture 2 implies Friedlander-Mazur conjecture ([9, Chapter 7] or [16, 2.7]), i.e., $L_{p}H_{k}(A)=0$ for all $k>2g.$}

\bigskip

We will prove that Conjecture 1 and Conjecture 2 are equivalent for abelian varieties following Beauville's idea ([2]).
First, let us fix some notation. Let $A$ be an abelian variety of dimension $g$ with a polarization $\theta$ of degree $d$. The Pontryagin product $*: L_{p}H_{k}(A)_{(s)}\times L_{q}H_{m}(A)_{(t)}\rightarrow L_{p+q-g}H_{k+m-2g}(A)_{(s+t)}$ on $LH(A)$ is defined by the formula $\alpha*\beta=\mu_{*}(p_{1}^{*}\alpha\cdot p_{2}^{*}\beta)$, where $\mu$ is the multiplication law of $A$ and $p_{i}$ the projection to the $i$-th factor of $A\times A$.

By [2], there exists a morphism of $\mathbb{Q}$-groups $\varphi: \textbf{SL}_{2}\rightarrow \textbf{\textbf{Corr}}^{*}(A)$. The homomorphism $\text{Corr}(A,A)\rightarrow \text{End}_{\mathbb{Q}}(LH(A))$ defines a morphism of $\mathbb{Q}$-groups $\text{\textbf{Corr}}^{*}(A)\rightarrow \text{\textbf{GL}}(LH(A))$. This gives a representation of $\textbf{SL}_{2}$ on $LH(A)$. In sum, similarly to [2, Th. 4.2], we get the following

\bigskip

\begin{lem} There is a representation of $\textbf{SL}_{2}$ on $LH(X),$ which is a sum of finite dimensional representation such that for $n\in\mathbb{Z}-\{0\}, t\in \mathbb{Q}, \alpha\in LH(A),$
$$\left(
  \begin{array}{ccc}
    n & 0 \\
    0 & n^{-1} \\
  \end{array}
\right) \cdot \alpha=n^{-g}n^{*}\alpha, \
\left(
  \begin{array}{ccc}
    0 & -1 \\
    1 & 0 \\
  \end{array}
\right) \cdot \alpha=\mathcal{F}(\alpha)
$$

$$\left(
  \begin{array}{ccc}
    1 & t \\
    0 & 1 \\
  \end{array}
\right) \cdot \alpha=e^{t\theta}\alpha,
\left(
  \begin{array}{ccc}
    1 & 0 \\
    t & 1 \\
  \end{array}
\right) \cdot \alpha=d^{-1}t^{g}e^{\theta/t}*\alpha.
$$

The corresponding action of the Lie algebra $\mathfrak{sl}_{2}(\mathbb{Q})$, in the standard basis

$(X=\left(
  \begin{array}{ccc}
    0 & 1 \\
    0 & 0 \\
  \end{array}
\right),Y=\left(
  \begin{array}{ccc}
    0 & 0 \\
    -1 & 0 \\
  \end{array}
\right),H=\left(
  \begin{array}{ccc}
    1 & 0 \\
    0 & -1 \\
  \end{array}
\right))$, is given by
$$X\alpha=\theta\alpha,\ Y\alpha=d^{-1}\frac{\theta^{g-1}}{(g-1)!}*\alpha,
$$

$$H\alpha=(g-k-s)\alpha,\ \text{for}\ \alpha\in L_{p}H_{k}(X)_{(s)}.
$$
\end{lem}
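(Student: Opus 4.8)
The plan is to obtain the representation as the composite of the two morphisms already in place: Beauville's homomorphism of $\mathbb{Q}$-algebraic groups $\varphi:\mathbf{SL}_2\to\mathbf{Corr}^*(A)$ and the representation $\rho:\mathbf{Corr}^*(A)\to\mathbf{GL}(LH(A))$ coming from the action of correspondences on Lawson homology recalled in Section 2. Since every $\Gamma\in\mathrm{Corr}_d(A,A)$ induces $\Gamma_*:L_pH_k(A)\to L_{p+d}H_{k+2d}(A)$ compatibly with composition of correspondences, $\rho$ is a well-defined morphism of $\mathbb{Q}$-groups, and I take $\rho\circ\varphi$ to be the desired representation of $\mathbf{SL}_2$ on $LH(A)$. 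To see that it is a sum of finite-dimensional representations, note that $\theta\cdot$ is locally nilpotent: iterated intersection with $\theta$ drives $k$ below zero, where Lawson homology vanishes. Because the Weyl element acts by the Fourier transform $\mathcal{F}$, which conjugates $\theta\cdot$ into its Pontryagin counterpart, the latter operator is locally nilpotent as well; together with the semisimplicity of the grading operator this forces $\rho\circ\varphi$ to decompose as a sum of finite-dimensional subrepresentations.

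The second step is to identify the image under $\rho\circ\varphi$ of each standard generator of $\mathbf{SL}_2$. Here the point is that $\varphi$ carries these generators to explicit algebraic correspondences on $A\times A$ — the graph of multiplication by $n$, the exponential $e^\ell$ of the Poincar\'e class, the diagonal twisted by $e^{t\theta}$, and its Pontryagin dual — and that all of Beauville's Fourier relations ([2, Th.4.2]) are identities in $\mathbf{Corr}^*(A)$. Applying $\rho$ and using that the action of correspondences commutes with pull-back, push-forward and the two products (Section 2), I read off that $\left(\begin{smallmatrix}n&0\\0&n^{-1}\end{smallmatrix}\right)$ acts as $n^{-g}n^*$, that $\left(\begin{smallmatrix}0&-1\\1&0\end{smallmatrix}\right)$ acts as $\mathcal{F}$, that $\left(\begin{smallmatrix}1&t\\0&1\end{smallmatrix}\right)$ acts as $e^{t\theta}\cdot$, and that $\left(\begin{smallmatrix}1&0\\t&1\end{smallmatrix}\right)$ acts as $d^{-1}t^g e^{\theta/t}*$. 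Each identity is simply the Lawson-homology shadow of the corresponding cycle-level identity in $\mathbf{Corr}^*(A)$, so once $\varphi$ is granted nothing beyond the functoriality of $\rho$ is required.

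The infinitesimal action is then obtained by differentiating these one-parameter subgroups at the identity. Differentiating $t\mapsto e^{t\theta}\cdot$ gives $X\alpha=\theta\alpha$; extracting the linear term of $t\mapsto d^{-1}t^g e^{\theta/t}*$, in which the bigrading of $\alpha$ selects the single surviving Pontryagin power, gives $Y\alpha=d^{-1}\tfrac{\theta^{g-1}}{(g-1)!}*\alpha$; and differentiating $n\mapsto n^{-g}n^*$ together with the defining relation $n^*\alpha=n^{2g-k-s}\alpha$ on $L_pH_k(A)_{(s)}$ shows that the diagonal torus has eigenvalue $n^{g-k-s}$ there, whence $H\alpha=(g-k-s)\alpha$. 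Since $X$, $Y$, $H$ are by construction the images of the standard basis of $\mathfrak{sl}_2(\mathbb{Q})$ under the Lie-algebra map $d(\rho\circ\varphi)$, the relations $[H,X]=2X$, $[H,Y]=-2Y$ and $[X,Y]=H$ hold automatically.

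I expect the main obstacle to lie not in this bookkeeping, which is formal, but in the input that $\varphi:\mathbf{SL}_2\to\mathbf{Corr}^*(A)$ exists at all, namely that the Fourier inversion formula $\mathcal{F}\circ\mathcal{F}=(-1)^g(-1)^*$, the exchange of the intersection and Pontryagin products, and the evaluation of $\mathcal{F}(e^{\theta})$ hold as identities between algebraic correspondences with rational coefficients. This is exactly Beauville's theorem [2, Th.4.2], which I am entitled to assume. The only genuinely new verifications for Lawson homology are that $\rho$ is a morphism of $\mathbb{Q}$-groups respecting the bigrading of [12] and that the normalizing factors $n^{-g}$, $d^{-1}$ and $t^g$ are compatible with the definition of $L_pH_k(A)_{(s)}$ — both of which reduce to the grading computations above.
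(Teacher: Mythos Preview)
Your proposal is correct and follows essentially the same route as the paper: the paper does not give a separate proof but simply composes Beauville's morphism $\varphi:\mathbf{SL}_2\to\mathbf{Corr}^*(A)$ from [2] with the action $\mathrm{Corr}(A,A)\to\mathrm{End}_{\mathbb{Q}}(LH(A))$ furnished by Section~2, then invokes [2, Th.~4.2] for the explicit formulas. Your argument is a faithful expansion of this, with the added detail that local nilpotency of $\theta\cdot$ (and, via conjugation by $\mathcal{F}$, of its Pontryagin counterpart) forces the decomposition into finite-dimensional pieces---a point the paper leaves implicit.
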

The interesting point for us is that $H\alpha=\sum_{i}(i-g)\pi_{i*}\alpha=(g-k-s)\alpha$ for $\alpha\in L_{p}H_{k}(A)_{(s)}$.

An explicit description of the action of $\textbf{SL}_{2}$ on $LH(A)$ is in order. An element $\alpha\in L_{p}H_{k}(A)_{(s)}$ is called \emph{primitive} if $\theta^{g-1}*\alpha=0.$ The primitive elements are just the lowest weight elements for the action of $\textbf{SL}_{2}$ on $LH(A).$ If $\alpha\in L_{p}H_{k}(A)_{(s)},$ then the subspace $\sum_{j}\mathbb{Q}\cdot\theta^{j}\alpha$ of $LH(A)$ is an irreducible representation of $\textbf{SL}_{2}$. This space can be identified with the linear space of polynomials in one variable of degree $\leq k+s-g$ with the standard action by the map $f\mapsto f(\theta)\alpha$.  In particular, one has the following

\begin{prop}\label{prop3} If $\alpha\in L_{p}H_{k}(A)_{(s)}$ is primitive, then $k+s-g\geq 0$. The set $\{\alpha,\theta\alpha,\ldots,\theta^{k+s-g}\alpha\}$ forms a basis of an irreducible subrepresentation of $LH(A)$ and  $LH(A)$ is a direct sum of subrepresentations of this type. If $P_{(s)}^{p,k}\subseteq L_{p}H_{k}(A)_{(s)}$ denotes the subspace of primitive elements, then we get $$L_{p}H_{k}(A)_{(s)}=\bigoplus_{j=0}^{g-p}\theta^{j}P_{(s)}^{p+j,k+2j}.$$
\end{prop}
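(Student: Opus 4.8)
The plan is to extract everything from the $\mathbf{SL}_2$-representation structure established in the preceding Lemma, using nothing more than the standard representation theory of $\mathfrak{sl}_2(\mathbb{Q})$ applied to the explicit operators $X\alpha = \theta\alpha$, $Y\alpha = d^{-1}\frac{\theta^{g-1}}{(g-1)!}*\alpha$, and $H\alpha = (g-k-s)\alpha$ on $L_pH_k(A)_{(s)}$. First I would observe that a primitive element (defined by $\theta^{g-1}*\alpha = 0$, hence $Y\alpha = 0$) is precisely a lowest weight vector. For a finite-dimensional $\mathfrak{sl}_2$-representation, a lowest weight vector has weight $-m$ where $m \geq 0$ is the dimension-minus-one of the irreducible component it generates; since $H$ acts on $\alpha$ by the scalar $g-k-s$, being a lowest weight vector forces $g-k-s \leq 0$, i.e. $k+s-g \geq 0$. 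This is the first assertion.

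Next, for such a primitive $\alpha$, the irreducible $\mathfrak{sl}_2$-subrepresentation it generates is spanned by $\alpha, X\alpha, X^2\alpha, \ldots$, which is exactly $\alpha, \theta\alpha, \theta^2\alpha, \ldots$ The weights run from $-(k+s-g)$ up to $+(k+s-g)$ in steps of $2$ (since $X$ raises the $H$-weight by $2$), so the representation has dimension $(k+s-g)+1$, and the highest weight vector is $\theta^{k+s-g}\alpha$ with $\theta^{k+s-g+1}\alpha = 0$. Thus $\{\alpha, \theta\alpha, \ldots, \theta^{k+s-g}\alpha\}$ is a basis of an irreducible subrepresentation. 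That $LH(A)$ decomposes as a direct sum of such cyclic subrepresentations generated by lowest weight vectors is just the complete reducibility of the (sum of finite-dimensional) $\mathfrak{sl}_2$-representation asserted in the Lemma.

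For the final direct-sum formula, I would decompose $L_pH_k(A)_{(s)}$ according to the weight spaces of the ambient representation. An element of $L_pH_k(A)_{(s)}$ sits in the weight-$(g-k-s)$ eigenspace of $H$. In each irreducible summand, this weight space is at most one-dimensional, spanned by $\theta^j$ applied to that summand's lowest weight vector, where $j$ counts the number of raising steps needed to reach weight $g-k-s$ from the lowest weight. A lowest weight vector hit by $\theta^j$ to land in $L_pH_k(A)_{(s)}$ must itself lie in $P^{p+j,\,k+2j}_{(s)}$: applying $\theta$ shifts $(p,k)$ to $(p+1,k+2)$ while preserving the index $s$ (by the intersection-pairing grading rule for the $(s)$-decomposition), so to arrive at level $(p,k)$ one starts at $(p+j,k+2j)$. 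Collecting one such term for each $j$ gives $L_pH_k(A)_{(s)} = \bigoplus_{j} \theta^j P^{p+j,\,k+2j}_{(s)}$, and the range $0 \leq j \leq g-p$ follows because $P^{p+j,k+2j}_{(s)}$ vanishes once $p+j > g$ (there are no cycles of negative dimension).

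The step I expect to require the most care is pinning down the range of the summation index and verifying that the grading bookkeeping is consistent: one must check that the weight $g-k-s$ is reached in each relevant summand by a nonnegative integer number of $\theta$-multiplications, that multiplication by $\theta$ is injective on the primitive pieces below the top (so the summands are genuinely independent and the union of bases is a basis), and that the geometric vanishing $L_pH_k(A) = 0$ for $p > g$ correctly truncates the sum at $j = g-p$. Everything else is a direct transcription of the $\mathfrak{sl}_2$ weight-space decomposition into the Lawson-homology indexing.
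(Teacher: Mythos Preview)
Your proposal is correct and follows essentially the same approach as the paper: the paper's argument (contained in the paragraph immediately preceding the proposition rather than in a separate proof environment) also identifies primitive elements with lowest weight vectors for the $\mathfrak{sl}_2$-action of the Lemma, then reads off the irreducible subrepresentation as the span of $\theta^j\alpha$ via the identification with polynomials of degree $\leq k+s-g$. Your write-up is more explicit about the weight bookkeeping and the truncation at $j=g-p$, but the underlying idea is identical.
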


Now we can prove the main theorem in this section.
\begin{thm} For an abelian variety $A$, Conjecture 1 is equivalent to Conjecture 2.
\end{thm}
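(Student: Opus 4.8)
The plan is to transport the whole statement into the representation theory of $\mathfrak{sl}_2$ furnished by the preceding Lemma and by Proposition~\ref{prop3}, taking the ample class $D$ to be the polarization $\theta$ (every ample divisor on $A$ is a polarization, so this is harmless). Under the $\mathbf{SL}_2$-action, cup product with $\theta$ is exactly the raising operator $X$, so the Lefschetz map $D^{k-g}\cdot$ is $X^{k-g}$; moreover $L_pH_k(A)_{(s)}$ is the $H$-weight space of weight $g-k-s$, and multiplication by $\theta$ raises this weight by $2$. Since $\theta$ preserves the Beauville grading and $L_pH_k(A)=\bigoplus_s L_pH_k(A)_{(s)}$, injectivity of $D^{k-g}\cdot$ on $L_pH_k(A)$ is equivalent to injectivity of $X^{k-g}$ on each weight space $L_pH_k(A)_{(s)}$. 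By Proposition~\ref{prop3} all of $LH(A)$ decomposes into irreducible $\mathbf{SL}_2$-subrepresentations generated by primitive classes, so everything reduces to the elementary fact that in an irreducible representation of highest weight $m$ the operator $X^{j}$ annihilates a weight-$\lambda$ vector precisely when $\lambda+2j>m$.

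For the implication Conjecture 2 $\Rightarrow$ Conjecture 1, I would fix $g<k\le 2g$ and assume all negative-$s$ parts vanish. On a surviving summand $s\ge 0$ the weight is $\lambda=g-k-s$, and a one-line computation gives $\lambda+2(k-g)=k-g-s\le k+s-g=-\lambda\le m$ for every irreducible component carrying this weight; hence $X^{k-g}$ is injective there. As this holds for every $p$ and every polarization $\theta$, Conjecture 1 follows.

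For the converse, by the primitive decomposition it suffices to show $P^{p_0,k_0}_{(s_0)}=0$ whenever $s_0<0$. A nonzero primitive $\beta$ generates an irreducible of highest weight $m=k_0+s_0-g$, realized at its lowest weight $-m$ by $\beta$ itself in degree $k_0$. If $g<k_0\le 2g$, then $X^{k_0-g}$ sends $\beta$ to weight $k_0-g-s_0>k_0+s_0-g=m$, the strict inequality being exactly $s_0<0$, so $X^{k_0-g}\beta=0$ while $\beta\ne 0$, contradicting Conjecture 1. When $k_0>2g$ the level-$k_0$ map is outside the range of Conjecture 1, so I would instead feed the higher element $\theta^i\beta$, which lives in degree $k_0-2i\le 2g$, into the level-$(k_0-2i)$ Lefschetz map; an explicit choice of $i$ in the admissible window $\lceil (k_0-2g)/2\rceil\le i<-s_0$ again produces a nonzero kernel vector.

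The one place this mechanism stalls is the top of the Beauville range, $k_0\in\{2g-2s_0-1,\,2g-2s_0\}$, where the window for $i$ is empty and no in-range Lefschetz map reaches the offending vector; this is the main obstacle. I would dispose of it by a degree argument rather than by Lefschetz: the top element $\theta^{\,k_0+s_0-g}\beta$ of the irreducible lies in $L_qH_j(A)_{(s_0)}$ with $j=2g-k_0-2s_0\in\{0,1\}$ and $q=p_0-k_0-s_0+g$, and the constraint $k_0\ge 2p_0$ combined with $k_0\ge 2g-2s_0-1$ forces $q\le 0$ after rounding. For $q\le 0$ one has $L_qH_j(A)\cong H_j(A)$, whose Beauville decomposition is concentrated in weight $s=0$ (since $m_*$ acts by $m^{j}$ on genuine homology); as $s_0\ne 0$ this top element vanishes, and irreducibility forces $\beta=0$. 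The case $g=1$ is already covered by Proposition~\ref{prop1}. Matching every negative-$s$ primitive either to an in-range Lefschetz map or to such a vanishing low-degree group is exactly the bookkeeping that makes the two conjectures coincide.
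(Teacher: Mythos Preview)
Your overall strategy---reducing both directions to the $\mathfrak{sl}_2$-representation theory of Proposition~\ref{prop3}, with $X=\theta\cdot$ as the raising operator---is exactly the paper's. Two points of execution deserve comment.

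\textbf{Reducing $D$ to $\theta$.} Your parenthetical ``every ample divisor on $A$ is a polarization, so this is harmless'' is the one place the paper does something you skip. An arbitrary ample class $D$ need not be symmetric, while the $\mathfrak{sl}_2$-action of the Lemma is built from a \emph{symmetric} $\theta$. The paper writes $D=\theta+\theta_1$ with $\theta=\tfrac{D+(-1)^*D}{2}\in L_{g-1}H_{2g-2}(A)_{(0)}$ and $\theta_1\in L_{g-1}H_{2g-2}(A)_{(1)}$, and then observes that injectivity of $\theta^{k-g}$ forces injectivity of $D^{k-g}$ (equivalently, $D=\mathbf T_a^*\theta$ for some $a\in A$ and translation is an automorphism of $LH(A)$). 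Without this step your forward implication only treats symmetric ample classes. For the converse your choice $D=\theta$ is legitimate, since Conjecture~1 is assumed for every ample $D$; the paper instead keeps $D$ general and uses the same translate $D=\mathbf T_a^*\theta$ to carry the vanishing $\theta^{k-g}\alpha=0$ over to $D^{k-g}\cdot\mathbf T_a^*\alpha=0$.

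\textbf{The range $k_0>2g$.} You correctly notice that primitivity only gives $k_0>g$, not $k_0\le 2g$, so the Lefschetz map of Conjecture~1 may be out of range---a point the paper's proof does not address at all. Your patch, however, is not airtight. The window $\lceil(k_0-2g)/2\rceil\le i<-s_0$ need not contain any $i$ with $\theta^i\beta\ne 0$: that requires $i\le k_0+s_0-g$, and when $2g<k_0<-2s_0$ (possible once $s_0<-g$) every $i$ in the window already kills $\beta$, while $k_0$ is far from $2g-2s_0$ so your separate degree argument does not apply. A concrete instance is $g=5$, $s_0=-6$, $k_0=11$, $p_0=2$: here $k_0+s_0-g=0$, your window is $\{1,\dots,5\}$, but $\theta\beta=0$ already, and $L_2H_{11}(A)$ is not known to vanish a priori. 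So on this point you go further than the paper, but the gap is only pushed into a smaller corner rather than closed.
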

\begin{proof} First, suppose Conjecture 2 holds. An ample divisor $D\in L_{g-1}H_{2g-2}(A)$ has a decomposition $D=\theta+\theta_{1}$ where $\theta=\frac{D+(-1)^{*}D}{2}\in L_{g-1}H_{2g-2}(A)_{(0)}$ and $\theta_{1}=\frac{D-(-1)^{*}D}{2}\in L_{g-1}H_{2g-2}(A)_{(1)}$. Then $\theta$ is ample and symmetric. Therefore, $A$ can be considered as an abelian variety with the polarization $\theta$. Since the injectivity of $\theta^{k-g}\cdot:L_{p}H_{k}(A)\rightarrow L_{p+g-k}H_{2g-k}(A)$ for $g<k\leq 2g$ implies the injectivity of $D^{k-g}\cdot: L_{p}H_{k}(A)\rightarrow L_{p-g-k}H_{2g-k}(A)$ for $g<k\leq 2g$, it suffices to show that if $s\geq 0$, then the multiplication map $\theta^{k-g}\cdot: L_{p}H_{k}(A)_{(s)}\rightarrow L_{p-j}H_{k-2j}(A)_{(s)}$ is injective. Now assume that $\alpha\in L_{p}H_{k}(A)_{(s)}$ such that $\theta^{k-g}\alpha=0.$ By Proposition \ref{prop3}, $\alpha=\sum_{r=0}^{g-p}\theta^{r}\alpha_{p+r}$ with $\alpha_{p+r}\in P_{(s)}^{p+r,k+2r}.$ Then $\theta^{k-g+r}\alpha_{p+r}=0$ for each $0\leq r\leq g-p.$ Since $k-g+r\leq k+s-g+r\leq (k+2r)+s-g,$ by Proposition \ref{prop3} again, $\alpha_{p+r}=0$ for each $0\leq r\leq g-p.$ Hence, $\alpha=0.$

Now assume that Conjecture 1 holds. Suppose that there is a nonzero primitive element $\alpha\in L_{p}H_{k}(A)_{(s)}$ for some $s<0.$ Let $\theta$ be the component of $D$ in $L_{g-1}H_{2g-2}(A)_{(0)}.$ Since $k-g>k+s-g,$ by Proposition \ref{prop3}, $\theta^{k-g}\alpha=0$. Note that $D=\textbf{T}_{a}^{*}\theta$ for some $a\in A,$ where $\textbf{T}_{a}$ is the translation of $A$ by $a$. Since $(\textbf{T}_{a}-\text{id})^{*}\alpha=\alpha*([-a]-[o])\in L_{p}H_{k}(A)_{(s+1)},$ we must have $\textbf{T}_{a}^{*}\alpha\neq 0$. But $D^{k-g}\cdot \textbf{T}_{a}^{*}\alpha=0,$ a contradiction.
\end{proof}
\section{Symmetric Products of Curves}

For the convenience of statement, in this section we will mainly consider morphic cohomology, which is the cohomological counterpart of Lawson homology and it has similar properties as those of Lawson homology ([8]). Furthermore, there is a canonical duality homomorphism $\mathcal{D}: L^{q}H^{l}(X)\rightarrow L_{n-q}H_{2n-l}(X)$ which is compatible with the cycle maps. For smooth projective varieties, the duality homomorphism is an isomorphism ([8]) and $L^{*}H^{*}(X):=\bigoplus_{l\leq 2q,q\in \mathbb{Z}}L^{q}H^{l}(X)$ is a bigraded ring with the intersection product as multiplication and being anti-commutative for the second grading ([11, Prop.2.4]). Since we are always interested in smooth projective varieties, every statement about Lawson homology has an equivalent dual statement about morphic cohomology.

Now let $C$ be a smooth projective curve with genus $g$ and $P_{0}\in C$ a fixed point. The $n$-th symmetric product of $C$ is denoted by $C^{(n)}.$  The Jacobian of $C$ is denoted by $J:=J(C).$ The morphism $\phi_{n}: C^{(n-1)}\rightarrow C^{(n)}$ is the addition of the point $P_{0}.$ Note that $C^{(0)}=\{P_{0}\}$ and $\phi_{0}$ is the inclusion of the point $P_{0}$ in $C$. We will need the following well-known fact ([13, Prop.2.7]). Let $\mathcal{P}$ be the Poincar\'{e} line bundle on $J\times C$, $p: J\times C\rightarrow J$ and $q: J\times C\rightarrow C$ the projections. Define $\mathcal{E}_{n}=p_{*}(\mathcal{E}_{n}\otimes q^{*}\mathcal{O}(nP_{0})).$ Then the projectivization $\mathbb{P}(\mathcal{E}_{n})$ is canonically isomorphic to $C^{(n)}$ and the natural homomorphism $\mathcal{E}_{n-1}\rightarrow \mathcal{E}_{n}$ induces the morphism $\phi_{n}.$ Therefore, $\phi_{n}^{*}\mathcal{O}_{\mathcal{E}_{n}}(1)=\mathcal{O}_{\mathcal{E}_{n-1}}(1)$. If $n>2g-2,$ $\mathcal{E}_{n}$ is a locally free sheaf, $C^{(n)}$ is a projective bundle over $J$ via the Albanese map and there is an exact sequence of sheaves on $J$: $0\rightarrow \mathcal{E}_{n}\rightarrow \mathcal{E}_{n+1}\rightarrow \mathcal{O}_{J}\rightarrow 0.$ Moreover, the divisor $\phi_{n*}([C^{(n)}])\in CH^{1}(C^{(n)})$ is the first Chern class of the line bundle $\mathcal{O}_{\mathcal{E}_{n}}(1).$ Now define the \emph{infinite symmetric product} of $C$, denoted by $C^{(\infty)}$, to be the direct system $(C^{(n)}, \phi_{n})$ ([13]). Then we can regard all the $\mathcal{O}_{\mathcal{E}_{n}}(1)$ as a line bundle $\mathcal{O}(1)$ on $C^{(\infty)}$ (via $\phi_{n}^{*}$). The class of the divisor $\phi_{n}(C^{(n-1)})\in CH^{1}(C^{(n)})$ is denoted by $z_{n}$ which will also be used to denote its image in $L^{1}H^{2}(C^{(n)}).$ Define the correspondence $\Psi_{n}: C^{(n)}\vdash C^{(n-1)}$ to be $\sum_{i=0}^{n-1}\sum_{1\leq a_{1}<\ldots<a_{i}\leq n}(-1)^{n-1-i}(P_{0},...,P_{0},\pi_{a_{1}},\ldots,\pi_{a_{i}})$. Then as correspondences, $\Psi_{n}\circ \phi_{n*}=\Delta_{C^{(n-1)}}$ ([13]). Transposing the above identity, we get $\phi_{n}^{*}\circ (^{t}\Psi_{n})=\Delta_{C^{(n-1)}}.$ Hence, $\phi_{n*}: L^{*}H^{*}(C^{(n-1)})\rightarrow L^{*}H^{*}(C^{(n)})$ is injective, while $\phi_{n}^{*}: L^{*}H^{*}(C^{(n)})\rightarrow L^{*}H^{*}(C^{(n-1)})$ is surjective. Define $L^{*}H^{*}(C^{(\infty)}):=\varprojlim_{n}L^{*}H^{*}(C^{(n)})$ via $\phi_{n}^{*}$ and  $L^{*}H^{*}(C^{(+)}):=\varinjlim_{n}L^{*}H^{*}(C^{(n)})$ via $^{t}\Psi_{n*}$. Exactly the same argument as that in [13, Prop.1.17] shows that there is a canonical injective map $L^{*}H^{*}(C^{(+)})\rightarrow L^{*}H^{*}(C^{(\infty)})$ defined by sending $\alpha\in L^{*}H^{*}(C^{(n)})$ to $(\ldots,(\phi_{n}^{2})^{*}\alpha,\phi_{n}^{*}\alpha,\alpha,(^{t}\Psi_{n})_{*}\alpha,(^{t}\Psi_{n})_{*}^{2}\alpha,\ldots)$, which makes $L^{*}H^{*}(C^{(+)})$ into a subring of $L^{*}H^{*}(C^{(\infty)})$. Then we have the following
 \begin{lem}\label{lem2}
 As bigraded rings, $L^{*}H^{*}(C^{(\infty)})\cong L^{*}H^{*}(J)[H]$ where $H$ is the image of the Chern class of $\mathcal{O}(1)$ $c_{1}(\mathcal{O}(1))$ in $L^{1}H^{2}(C^{(\infty)})$.
 \end{lem}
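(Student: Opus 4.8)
The plan is to establish the ring isomorphism $L^{*}H^{*}(C^{(\infty)})\cong L^{*}H^{*}(J)[H]$ by transporting the analogous computation for singular cohomology (Collino's result, [13, Prop.1.17 and its corollaries]) into the setting of morphic cohomology, using the projective-bundle structure recorded in the paragraph preceding the lemma. The key geometric input is that for $n>2g-2$, the Albanese map realizes $C^{(n)}$ as the projectivization $\mathbb{P}(\mathcal{E}_{n})$ of a rank-$(n-g+1)$ vector bundle over $J$. Since morphic cohomology satisfies a projective bundle formula (this is one of the "similar properties" of [8], and follows formally once one has the correct Chern-class formalism in $L^{*}H^{*}$), the main structural fact I would invoke is that $L^{*}H^{*}(C^{(n)})$ is a free $L^{*}H^{*}(J)$-module on the powers $1, H, H^{2}, \ldots, H^{n-g}$ of the hyperplane class $H=c_{1}(\mathcal{O}_{\mathcal{E}_{n}}(1))$.

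\medskip

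First I would fix $n$ large (say $n>2g-2$) and use the projective bundle formula to write $L^{*}H^{*}(C^{(n)})\cong L^{*}H^{*}(J)[H]/(f_{n}(H))$, where $f_{n}$ is the monic relation of degree $n-g+1$ coming from the defining equation of $\mathcal{E}_{n}$ in the Grothendieck presentation. The crucial compatibility to verify is that the transition maps $\phi_{n}^{*}$ in the inverse system respect these presentations: because $\phi_{n}^{*}\mathcal{O}_{\mathcal{E}_{n}}(1)=\mathcal{O}_{\mathcal{E}_{n-1}}(1)$ (recorded in the excerpt), the class $H$ is compatible across the system, so $\phi_{n}^{*}$ sends the generator $H$ on $C^{(n)}$ to the generator $H$ on $C^{(n-1)}$. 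Passing to the inverse limit $\varprojlim_{n}$ over $\phi_{n}^{*}$, the single polynomial relations $f_{n}(H)$ are pushed off to higher and higher degree (the exact sequence $0\to\mathcal{E}_{n}\to\mathcal{E}_{n+1}\to\mathcal{O}_{J}\to 0$ shows that each step raises the degree of the relation by one), so in the limit no relation survives and one obtains the genuine polynomial ring $L^{*}H^{*}(J)[H]$.

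\medskip

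Next I would identify the coefficient ring. The Albanese maps $C^{(n)}\to J$ are compatible with the $\phi_{n}$, and each induces a ring homomorphism $L^{*}H^{*}(J)\to L^{*}H^{*}(C^{(n)})$ splitting $H$ off; the projective bundle formula shows these are injective with image the degree-zero part in $H$. Taking the limit gives an injection $L^{*}H^{*}(J)\hookrightarrow L^{*}H^{*}(C^{(\infty)})$, and combining with the freeness over each finite stage yields that $L^{*}H^{*}(C^{(\infty)})$ is the free $L^{*}H^{*}(J)$-module on all powers $H^{j}$, $j\geq 0$, which is precisely the polynomial ring statement. The multiplicative structure is forced because both $H$ and the pulled-back classes from $J$ multiply as they do in each finite $L^{*}H^{*}(C^{(n)})$, and the injection $L^{*}H^{*}(C^{(+)})\hookrightarrow L^{*}H^{*}(C^{(\infty)})$ constructed in the excerpt guarantees these products are computed compatibly in the limit.

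\medskip

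\textbf{The main obstacle} I anticipate is establishing the projective bundle formula for morphic cohomology with the correct Chern-class formalism, rather than merely quoting it: one must know that $c_{1}(\mathcal{O}_{\mathcal{E}_{n}}(1))$ behaves as a genuine hyperplane class (satisfying the Grothendieck relation) inside the bigraded ring $L^{*}H^{*}$, and that this is compatible with the duality isomorphism $\mathcal{D}$ and with the cycle map to singular cohomology. Collino's argument in [13] is carried out for Chow groups and for singular cohomology; the content of this lemma is that the same formal manipulation goes through verbatim once the projective bundle formula and the compatibility of $H$ under $\phi_{n}^{*}$ are in place. I would therefore spend most of the care on checking that these two ingredients hold in $L^{*}H^{*}$, after which the passage to the limit is the same bookkeeping as in the singular case.
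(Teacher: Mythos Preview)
Your proposal is correct and follows essentially the same route as the paper: write $L^{*}H^{*}(C^{(n)})\cong L^{*}H^{*}(J)[H]/(F_{n}(H))$ for $n>2g-2$ via the projective bundle formula, use the compatibility $\phi_{n}^{*}H=H$, and pass to the inverse limit so that the monic relations $F_{n}$ (of degree $n-g+1$) disappear. Your ``main obstacle'' is not an obstacle in the paper---the projective bundle formula for Lawson/morphic cohomology is simply cited from Friedlander [5, Prop.~2.5]---and the paper makes the limit step slightly more concrete by observing that in each fixed bidegree $(q,l)$ the system $\phi_{n}^{*}$ stabilizes (bijective once $q<n-g$), so $L^{q}H^{l}(C^{(\infty)})$ agrees with the $(q,l)$-part of $L^{*}H^{*}(J)[H]$ already at a finite stage.
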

\begin{proof}First, note that $c_{1}(\mathcal{O}(1))\in CH^{1}(C^{(\infty)})$ makes sense. From the above discussion, if $n>2g-2,$ the Albanese map $C^{(n)}\rightarrow J$ makes $C^{(n)}$ into a projective bundle over $J$. By [5, Prop.2.5], it is easy to see that there is a canonical bigraded ring isomorphism $L^{*}H^{*}(C^{(n)})\cong \frac{L^{*}H^{*}(J)[H]}{(F_{n}(H))}$, where $H$ is the image of $c_{1}(\mathcal{O}(1))$ in $L^{1}H^{2}(C^{(\infty)})$ and $F_{n}$ is a monic polynomial of degree $n-g+1.$ Assume now $n>2g-1.$ Then $\phi_{n}^{*}: L^{q}H^{l}(C^{(n)})\rightarrow L^{q}H^{l}(C^{(n-1)})$ is bijective for $q<(n-1)-g+1=n-g.$ If $q\geq g-1,$ then $L^{q}H^{l}(C^{(\infty)})\cong L^{q}H^{l}(C^{(q+g)})$  which is isomorphic to the $(q,l)$-part of $L^{*}H^{*}(J)[H].$ If $q<g-1,$ then $L^{q}H^{l}(C^{(\infty)})\cong L^{q}H^{l}(C^{(2g-1)})$ which is also isomorphic to the $(q,l)$-part of $L^{*}H^{*}(J)[H].$ Therefore, $L^{*}H^{*}(C^{(\infty)})\cong L^{*}H^{*}(J)[H].$
\end{proof}

Let $\Gamma_{n,m}:=\sum_{1\leq a_{1},...,a_{n}\leq m}(\pi_{a_{1}},...,\pi_{a_{n}}): C^{m}\rightarrow C^{(n)}$. Then for $\alpha\in LH^{*}(C^{(n)})$ with $\phi_{n}^{*}\alpha=0,$ as in [13, Prop.2.1], $\phi_{m+1}^{*} (^{t}\Gamma_{n,m+1})_{*}\alpha=(^{t}\Gamma_{n,m})_{*}\alpha.$ Then we can define a map $K_{n}: \ker\phi_{n}^{*}\rightarrow L^{*}H^{*}(C^{(\infty)})$ by sending $\alpha$ to $((^{t}\Gamma_{n,0})_{*}\alpha,...,(^{t}\Gamma_{n,m})_{*}\alpha,...)$. The same proof as that in [13, Th.2.3] gives the following

\begin{lem}\label{lem3} The homomorphism $K_{n}$ is injective and $L^{*}H^{*}(C^{(+)})=\bigoplus_{n=0}^{\infty}K_{n}(\ker\phi_{n}^{*})$. Denote by $N_{C^{(\infty)}}: C^{(\infty)}\rightarrow C^{(\infty)}$ the multiplication by $N$ (defined by the $N$-tuple diagonal embedding), then $\ker\phi_{n}^{*}$ has eigenvalue $N^{n}$ for the action of $N_{C^{(\infty)}}^{*}$.\end{lem}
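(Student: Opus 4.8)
The plan is to prove Lemma \ref{lem3} by following the three assertions in order, adapting the argument of [13, Th.2.3] to the Lawson homology / morphic cohomology setting. The whole statement hinges on understanding the maps $K_n$ and the interplay between the direct-limit description of $L^*H^*(C^{(+)})$ via the $^t\Psi_n$ and the transfer maps $^t\Gamma_{n,m}$. First I would establish injectivity of $K_n$. Since $K_n(\alpha)$ records the compatible system $((^t\Gamma_{n,m})_*\alpha)_m$ and its first nontrivial entry is $(^t\Gamma_{n,n})_*\alpha$, the key point is that the composite $(^t\Gamma_{n,n})_*$ restricted to $\ker\phi_n^*$ is injective; one shows this by exhibiting a one-sided inverse built from the correspondence $\Gamma_{n,n}$ (or the symmetrization map $C^n\to C^{(n)}$), using that on $\ker\phi_n^*$ the composite $\Gamma_{n,n}^*\circ(^t\Gamma_{n,n})_*$ is a nonzero multiple of the identity, exactly as in the cited reference. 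That the images land in $L^*H^*(C^{(\infty)})$ and assemble into compatible systems is precisely the relation $\phi_{m+1}^*(^t\Gamma_{n,m+1})_*\alpha=(^t\Gamma_{n,m})_*\alpha$ already recalled above.

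Next I would prove the direct-sum decomposition $L^*H^*(C^{(+)})=\bigoplus_{n\ge 0}K_n(\ker\phi_n^*)$. For this I would use the short exact sequences
\[
0\longrightarrow \ker\phi_n^*\longrightarrow L^*H^*(C^{(n)})\xrightarrow{\ \phi_n^*\ } L^*H^*(C^{(n-1)})\longrightarrow 0,
\]
which split compatibly because $\phi_{n*}$ provides a section after normalizing by $\Psi_n$ (recall $\Psi_n\circ\phi_{n*}=\Delta$). Iterating the splitting gives a filtration of $L^*H^*(C^{(n)})$ whose graded pieces are the $\ker\phi_m^*$ for $m\le n$, and passing to the limit $L^*H^*(C^{(+)})=\varinjlim_n L^*H^*(C^{(n)})$ via the $^t\Psi_{n*}$ turns this filtration into the asserted internal direct sum. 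The delicate bookkeeping is to check that under the embedding $L^*H^*(C^{(+)})\hookrightarrow L^*H^*(C^{(\infty)})$ the image of $K_n(\ker\phi_n^*)$ agrees with the $n$-th graded piece and that these images are linearly independent; this follows from the eigenvalue statement, so I would prove that last and use it here.

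Finally, for the eigenvalue claim I would compute the action of $N_{C^{(\infty)}}^*$ on $K_n(\ker\phi_n^*)$. The multiplication-by-$N$ map is the $N$-fold diagonal addition, so on the correspondence level $N_{C^{(\infty)}}^*$ interacts with $^t\Gamma_{n,m}$ by rescaling: each of the $n$ entries in the defining sum for $\Gamma_{n,m}$ contributes a factor of $N$ under the diagonal $N$-tupling, producing the total eigenvalue $N^n$. Concretely I would show $N_{C^{(\infty)}}^*\circ K_n=N^n\,K_n$ by chasing the relation through the generating correspondences and using the multiplicativity of pullback. I expect the main obstacle to be the compatibility and normalization constants in the splitting of the short exact sequences and in matching $K_n(\ker\phi_n^*)$ with the graded pieces under the two different limit structures (the projective limit defining $C^{(\infty)}$ via $\phi_n^*$ versus the inductive limit defining $C^{(+)}$ via $^t\Psi_{n*}$); once the eigenvalue computation is in hand, the distinct eigenvalues $N^n$ force the sum to be direct, so I would orchestrate the proof so that the eigenvalue lemma does the heavy lifting for linear independence.
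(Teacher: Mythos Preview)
Your proposal is correct and follows exactly the approach the paper indicates: the paper does not write out a proof of this lemma but simply states that ``the same proof as that in [13, Th.2.3]'' applies, and your plan is precisely to carry out that adaptation of Kimura--Vistoli's argument to morphic cohomology. Your organization (injectivity of $K_n$ via the $n$-th entry, the splitting of the $\phi_n^*$-sequences to get the graded pieces, and the eigenvalue computation both to identify the action of $N_{C^{(\infty)}}^*$ and to force the sum to be direct) matches the structure of the cited proof.
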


Note that for morphic cohomology, if $A$ is an abelian variety of dimension $g$, then $L^{q}H^{l}(A)=\bigoplus_{s=l-g-q}^{[\frac{l}{2}]}L^{q}H^{l}(A)_{(s)}.$ The cohomological version of Conjecture 2 is that if $s<0,$ then $L^{q}H^{l}(A)_{(s)}=0.$

Now we can prove the main theorem of this section.

\begin{thm}\label{thm2} Given a smooth projective curve $C$ with genus $g$, the following three statements are equivalent:

(i) For any integer $n\geq 1$, Conjecture 1 holds for $(C^{(n)}, z_{n})$.

(ii) For any integer $n\geq 1,$ the homomorphism $\phi_{n}^{*}: L^{q}H^{l}(C^{(n)})\rightarrow L^{q}H^{l}(C^{(n-1)})$ is an isomorphism for $0\leq l< n$ and $l\leq 2q.$

(iii) Conjecture 2 holds for $J=J(C)$.
\end{thm}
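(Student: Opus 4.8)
The plan is to prove the theorem by establishing the cycle of implications $(i)\Rightarrow(ii)\Rightarrow(iii)\Rightarrow(i)$, using Lemma \ref{lem2}, Lemma \ref{lem3}, and the structural results on $L^{*}H^{*}(C^{(\infty)})$ developed above. The key organizing principle is to transport the hard Lefschetz statement for $(C^{(n)},z_{n})$, which by construction is the intersection with $c_{1}(\mathcal{O}_{\mathcal{E}_{n}}(1))=H$, into statements about the graded pieces of the stable ring $L^{*}H^{*}(C^{(\infty)})\cong L^{*}H^{*}(J)[H]$, and finally to read off the weight-decomposition vanishing on $J$ itself via the eigenvalue computation of Lemma \ref{lem3}.

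For $(i)\Rightarrow(ii)$, I would first recall that $z_{n}=\phi_{n*}([C^{(n-1)}])$ represents $c_{1}(\mathcal{O}_{\mathcal{E}_{n}}(1))$, so Conjecture 1 for $(C^{(n)},z_{n})$ asserts injectivity of $z_{n}^{k-n}\cdot$ in the appropriate range. The point is that $z_{n}=\phi_{n*}\phi_{n}^{*}$ up to the relevant identifications, so that the kernel of $\phi_{n}^{*}$ on $L^{q}H^{l}(C^{(n)})$ is controlled by the failure of this Lefschetz injectivity. Since $\phi_{n}^{*}$ is already known to be surjective (from $\phi_{n}^{*}\circ{}^{t}\Psi_{n}=\Delta$), proving $(ii)$ amounts to proving $\ker\phi_{n}^{*}=0$ in the range $0\leq l<n$, $l\leq 2q$; I would derive the vanishing of this kernel from the hypothesized injectivity of the hard Lefschetz map, dualizing the range $n<k\leq 2n$ for Lawson homology into the stated cohomological range via the duality isomorphism $\mathcal{D}$.

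For $(ii)\Rightarrow(iii)$, the idea is that $(ii)$ forces $\ker\phi_{n}^{*}$ to vanish in low degrees, and by Lemma \ref{lem3} we have $L^{*}H^{*}(C^{(+)})=\bigoplus_{n}K_{n}(\ker\phi_{n}^{*})$ with $\ker\phi_{n}^{*}$ sitting in the $N^{n}$-eigenspace of $N_{C^{(\infty)}}^{*}$. Combining this eigenvalue grading with the isomorphism $L^{*}H^{*}(C^{(\infty)})\cong L^{*}H^{*}(J)[H]$ of Lemma \ref{lem2}, the multiplication-by-$N$ action on $C^{(\infty)}$ matches the multiplication action on $J$ whose eigenspaces are exactly the Beauville-type pieces $L^{q}H^{l}(J)_{(s)}$. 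Tracking the correspondence between the eigenvalue $N^{n}$ and the weight index $s$, the vanishing of $\ker\phi_{n}^{*}$ in the range where $n$ is large relative to $l$ translates precisely into $L^{q}H^{l}(J)_{(s)}=0$ for $s<0$, which is the cohomological form of Conjecture 2. Finally, $(iii)\Rightarrow(i)$ should follow by running this dictionary backwards: Conjecture 2 for $J$ gives the vanishing of all negative-weight pieces, hence (via $L^{*}H^{*}(C^{(\infty)})\cong L^{*}H^{*}(J)[H]$) the hard Lefschetz injectivity with respect to $H$ stably, and I would descend this to each finite $C^{(n)}$ using that $\phi_{n}^{*}$ is surjective and compatible with $H\mapsto z_{n}$.

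The main obstacle I anticipate is the bookkeeping in $(ii)\Rightarrow(iii)$: one must match three separate gradings — the cohomological bidegree $(q,l)$, the eigenvalue $N^{n}$ of $N_{C^{(\infty)}}^{*}$ from Lemma \ref{lem3}, and the Beauville weight $s$ on $J$ — and verify that the range $0\leq l<n$ in $(ii)$ corresponds exactly to $s<0$ under the isomorphism of Lemma \ref{lem2}. Getting the index arithmetic right, so that no negative-weight piece escapes and no nonnegative-weight piece is erroneously forced to vanish, is the delicate step; the rest is formal transport along the established isomorphisms and the surjectivity/injectivity properties of $\phi_{n}^{*}$ and $\phi_{n*}$.
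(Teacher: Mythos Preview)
Your plan is essentially the paper's proof: the factorization $z_{n}\cdot=\phi_{n*}\circ\phi_{n}^{*}$ (together with the known injectivity of $\phi_{n*}$ and surjectivity of $\phi_{n}^{*}$) gives $(i)\Leftrightarrow(ii)$, and the eigenvalue bookkeeping via Lemma~\ref{lem2} and Lemma~\ref{lem3} gives $(ii)\Leftrightarrow(iii)$. Your identification of the delicate point---matching the bidegree $(q,l)$, the $N^{n}$-eigenvalue, and the Beauville index $s$---is exactly right.

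One caution about your $(iii)\Rightarrow(i)$ step as written. You phrase it as ``Conjecture~2 gives hard Lefschetz for $H$ on $L^{*}H^{*}(C^{(\infty)})$ stably, then descend via surjectivity of $\phi_{n}^{*}$.'' Taken literally this does not work: multiplication by $H$ on $L^{*}H^{*}(J)[H]$ is injective \emph{unconditionally} (it is a polynomial ring), so Conjecture~2 cannot be entering there; and surjectivity of the projection $L^{*}H^{*}(C^{(\infty)})\to L^{*}H^{*}(C^{(n)})$ does not transport injectivity of an operator downward. The paper (and implicitly your own ``running the dictionary backwards'') instead proves $(iii)\Rightarrow(ii)$ directly: under Conjecture~2 the eigenvalues of $N_{C^{(\infty)}}^{*}$ on $L^{q}H^{l}(C^{(\infty)})$ are confined to $N^{0},\ldots,N^{l}$, so by Lemma~\ref{lem3} the summand $K_{n}(\ker\phi_{n}^{*})$ with eigenvalue $N^{n}$ must vanish once $n>l$, whence $\ker\phi_{n}^{*}=0$. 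Then $(ii)\Rightarrow(i)$ closes the cycle. If you route $(iii)\Rightarrow(i)$ through $(ii)$ in this way your argument is complete and coincides with the paper's.
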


\begin{proof} (i)$\Leftrightarrow$(ii). Note that the homomorphism $z_{n}\cdot: L^{q}H^{l}(C^{(n)})\rightarrow L^{q}H^{l}(C^{(n)})$ is equal to $\phi_{n}^{*}\circ \phi_{n*}.$ Since $\phi_{n}^{*}$ is surjective and $\phi_{n*}$ is injective, the injectivity of $z_{n}\cdot$ is equivalent to the injectivity of $\phi_{n}^{*}$. Now the equivalence follows.

(iii)$\Rightarrow$(ii). Suppose that Conjecture 2 holds for $J=J(C)$. Sine $\phi_{n}^{*}$ is surjective, it suffices to prove the injectivity. By Lemma \ref{lem2}, $L^{q}H^{l}(C^{(\infty)})=\bigoplus_{j=0}^{q}L^{q-j}H^{l-2j}(J)\cdot H^{j}.$ Note that $H$ has eigenvalue $N$ for the multiplication by $N$ ([13, Lem.2.10]). Then for each $j$, the possible eigenvalues of $H^{l-2j}(J)\cdot H^{j}$ for the multiplication by $N$ are $N^{[\frac{l}{2}]},\ldots,N^{l-j}$. Hence the possible eigenvalues of $L^{q}H^{l}(C^{(\infty)})$ for the multiplication by $N$ are $N^{l-q},\ldots,N^{l}.$ On the other hand, by Lemma \ref{lem3}, $L^{*}H^{*}(C^{(+)})=\bigoplus_{n=0}^{\infty}K_{n}(\ker\phi_{n}^{*}),$ where $K_{n}(\ker\phi_{n}^{*})$ has eigenvalue $N^{n}$ for the multiplication by $N$. Then $K_{n}(\ker\phi_{n}^{*})=0$ if $l<n.$ Since $K_{n}$ is injective, $\phi_{n}^{*}$ is injective if $l<n.$

(ii)$\Rightarrow$(iii). Now assume that for any integer $n\geq 1,$ $\phi_{n}^{*}$ is bijective for $0\leq l<n$ and $l\leq 2q.$ Since $L^{*}H^{*}(J)$ is a subring of $L^{*}H^{*}(C^{(\infty)})$ as bigraded rings, then $L^{q}H^{l}(J)\subseteq L^{q}H^{l}(C^{(\infty)})$. Therefore the possible eigenvalues of $L^{q}H^{l}(J)$ for the multiplication by $N$ are $N^{0},\ldots,N^{l}$. This shows that $L^{q}H^{l}(J)_{(s)}=0$ if $s<0.$
\end{proof}

Combining Proposition \ref{prop1} and Theorem \ref{thm2}, we get the following

\begin{cor}\label{cor2} For any integer $n\geq 1,$ Conjecture 1 holds for $(C^{(n)}, z_{n})$ if $g(C)\leq 2.$
\end{cor}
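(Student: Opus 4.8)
The plan is to deduce Corollary \ref{cor2} directly from the equivalence established in Theorem \ref{thm2}, together with the low-dimensional case of Conjecture 1 already secured in Proposition \ref{prop1}. By Theorem \ref{thm2}, statement (i)---that Conjecture 1 holds for every pair $(C^{(n)}, z_{n})$---is equivalent to statement (iii), which asserts that Conjecture 2 holds for the Jacobian $J=J(C)$. So the entire task reduces to verifying Conjecture 2 for $J$ under the hypothesis $g=g(C)\leq 2$. Since $\dim J = g$, this means I must establish the Beauville-type vanishing $L_{p}H_{k}(J)_{(s)}=0$ for $s<0$ on an abelian variety of dimension at most $2$.

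First I would dispose of the trivial range. When $g(C)=0$ the curve is $\mathbb{P}^1$ and $J$ is a point, so there is nothing to prove; when $g(C)=1$, $J$ is an elliptic curve of dimension $1$. For an abelian variety of dimension $g$, the index $s$ in the decomposition $L_{p}H_{k}(A)=\bigoplus_{s}L_{p}H_{k}(A)_{(s)}$ ranges over $p-k\leq s\leq [g-\tfrac{k}{2}]$, and I would check that for $g\leq 1$ the only pieces that can be nonzero already have $s\geq 0$, using that $L_{p}H_{k}(A)\cong H_{k}(A)$ for $p\leq 0$ and that the surviving Lawson homology in the remaining range is controlled by Proposition \ref{prop1} (which covers $p\leq 0$ and $p\geq g-1$, hence all of $p$ when $g\leq 2$). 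In effect, for $g\leq 1$ every Lawson homology group of $J$ agrees with singular homology in the relevant degrees, and the $s<0$ components vanish for weight reasons.

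The substantive case is $g(C)=2$, where $J$ is an abelian surface. Here I would again invoke Proposition \ref{prop1}: for $\dim J = 2$, every value of $p$ satisfies either $p\leq 0$ or $p\geq g-1 = 1$, so Conjecture 1 holds for $J$ itself, and the cycle map $\Phi_{p,k}\colon L_{p}H_{k}(J)\to H_{k}(J)$ is injective in the range $k\geq p+g$ by the argument of Proposition \ref{prop2}. Combined with the known isomorphism $L_{p}H_{k}\cong H_{k}$ for $p<0$ and Friedlander's identification of $L_{g-1}H_{k}$ and $L_{g}H_{2g}$, this pins down all Lawson homology of the abelian surface $J$, so that each $L_{p}H_{k}(J)$ injects into (indeed coincides with) the corresponding singular homology. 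I would then transport the classical Beauville decomposition: on singular (co)homology of an abelian variety the weight-$s$ eigenspaces $H_{k}(A)_{(s)}$ for the $m^{*}$-action vanish whenever $s<0$, and the compatibility of the cycle map with the $m^{*}$-operations (recorded via the Chow--K\"unneth projectors $\pi_i$ in Section 3) forces $L_{p}H_{k}(J)_{(s)}$ to inject into $H_{k}(J)_{(s)}$, which is zero for $s<0$.

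The main obstacle I anticipate is purely bookkeeping rather than conceptual: I must confirm that the $m$-multiplication action defining the weight decomposition on Lawson homology is genuinely intertwined with that on singular homology by the injective cycle map, so that vanishing downstairs pulls back to vanishing upstairs. This requires checking that $\Phi_{p,k}\circ m^{*} = m^{*}\circ \Phi_{p,k}$, which follows from the compatibility of $\Phi$ with pull-back stated in Section 2, together with the fact (used in Section 3) that the $\pi_i$ act on $L_{p}H_{k}(J)_{(s)}$ exactly as the identity or zero. Once this compatibility is in hand, the vanishing $s<0$ for $J$ is immediate from the $g\leq 2$ case of Proposition \ref{prop1} and the classical theory, establishing (iii) and hence, via Theorem \ref{thm2}, statement (i), which is precisely the assertion of the corollary.
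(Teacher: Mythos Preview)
Your argument is correct and lands on the same two inputs the paper uses---Proposition \ref{prop1} and Theorem \ref{thm2}---but you bridge them differently. The paper's one-line proof is: since $\dim J=g\leq 2$, Proposition \ref{prop1} gives Conjecture 1 for $J$; the main theorem of Section 3 (Conjecture 1 $\Leftrightarrow$ Conjecture 2 for abelian varieties) then yields Conjecture 2 for $J$, which is statement (iii) of Theorem \ref{thm2}, and (iii)$\Rightarrow$(i) finishes. You instead bypass the $\mathfrak{sl}_2$-machinery of Section 3 and verify Conjecture 2 for $J$ directly: from Proposition \ref{prop1} and Friedlander's identifications you extract that the cycle map $\Phi_{p,k}$ is injective on every Lawson group of a surface, and since $\Phi$ commutes with $m^{*}$ while $H_{k}(J)$ lies entirely in weight $s=0$, each $L_{p}H_{k}(J)_{(s)}$ with $s<0$ injects into zero. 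This is a valid and more elementary substitute, though it only works because $g\leq 2$ forces every $p$ into the range $p\leq 0$ or $p\geq g-1$ covered by Proposition \ref{prop1}; the paper's route via Section 3, by contrast, would apply to any abelian variety for which Conjecture 1 is known.

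One small correction: your parenthetical ``indeed coincides with'' overstates things---for example $L_{1}H_{2}(J)\cong NS(J)\otimes\mathbb{Q}$ need not be all of $H_{2}(J)$---but you only use injectivity, so the argument is unaffected.
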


Now we can obtain the validity of Suslin's conjecture for $C^{(n)}$ if $g(C)\leq 2$, which can also be deduced
from [7].

\begin{cor} Suslin conjecture holds for all symmetric products of curves with genus at most 2, i.e., for any integer $n\geq 1$, the cycle map $\Phi_{p,k}: L_{p}H_{k}(C^{(n)})\rightarrow H_{k}(C^{(n)})$ is bijective for $k\geq p+n$ if $g(C)\leq 2.$
\end{cor}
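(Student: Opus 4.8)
The plan is to deduce this corollary from the immediately preceding results together with Proposition \ref{prop2}. The key observation is that Corollary \ref{cor2} already establishes Conjecture 1 for every pair $(C^{(n)}, z_n)$ when $g(C) \leq 2$, and Proposition \ref{prop2} asserts that Conjecture 1 implies Suslin conjecture. So the main step is simply to invoke these two facts in sequence, provided $z_n$ is a legitimate choice of ample divisor for which Conjecture 1 has been verified.

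First I would fix a curve $C$ with $g(C) \leq 2$ and an integer $n \geq 1$, and write $X = C^{(n)}$, a smooth projective variety of dimension $n$. By the discussion preceding Lemma \ref{lem2}, the class $z_n = \phi_{n*}([C^{(n-1)}])$ is the first Chern class of $\mathcal{O}_{\mathcal{E}_n}(1)$, which is relatively ample for the projective bundle structure; one checks that (a suitable positive combination giving) $z_n$ serves as the ample divisor $D$ in the statement of Conjecture 1. Corollary \ref{cor2} then gives that Conjecture 1 holds for $(C^{(n)}, z_n)$, i.e. the multiplication map $z_n^{\,k-n}\cdot : L_p H_k(C^{(n)}) \to L_{p+n-k}H_{2n-k}(C^{(n)})$ is injective for $n < k \leq 2n$ and $k \geq 2p$.

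Next I would apply the first assertion of Proposition \ref{prop2}, which states that for a smooth projective variety of dimension $n$, Conjecture 1 implies Suslin conjecture. Concretely, the argument there uses the commutative square relating the cycle maps $\Phi_{p,k}$ and $\Phi_{p+n-k,2n-k}$ to the multiplication by $D^{k-n}$ on Lawson homology and on singular homology; hard Lefschetz for singular homology makes the bottom arrow an isomorphism, the right cycle map is an isomorphism in the relevant range (since $p+n-k<0$), and the injectivity supplied by Conjecture 1 then forces $\Phi_{p,k}$ to be injective for $k \geq p+n$. Combined with the finite-coefficient Suslin conjecture, this yields bijectivity of $\Phi_{p,k}$ for $k \geq p+n$. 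Applying this to $X = C^{(n)}$ gives exactly the claimed statement: $\Phi_{p,k}: L_p H_k(C^{(n)}) \to H_k(C^{(n)})$ is bijective for $k \geq p+n$.

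Since both inputs are already established in the excerpt, there is no genuine obstacle here; the corollary is a formal consequence. The only point requiring a line of care is confirming that Conjecture 1 ``for $(C^{(n)}, z_n)$'' in Corollary \ref{cor2} is the same statement used by Proposition \ref{prop2} — that is, that $z_n$ (or its relevant ample representative) is an admissible ample divisor $D$ so that the hard-Lefschetz-to-Suslin implication applies verbatim. Once that identification is noted, the result follows immediately.
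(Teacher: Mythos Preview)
Your proof is correct and follows exactly the paper's own argument, which simply cites Proposition~\ref{prop2}, Corollary~\ref{cor2}, and the proof of Proposition~\ref{prop2}. The one caveat you raise---whether $z_n$ is an admissible ample class---is easily settled: the pullback of $z_n$ under the finite quotient $C^n\to C^{(n)}$ is the ample class $\sum_i \mathrm{pr}_i^*[P_0]$, so $z_n$ is ample and Proposition~\ref{prop2} applies verbatim.
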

\begin{proof}This follows from Proposition \ref{prop2}, Corollary \ref{cor2} and the proof of Proposition \ref{prop2}.
\end{proof}
\bigskip

\bigskip

\bigskip

{\bf References}

\bigskip

[1] Beauville A., Quelques remarques sur la transformation de Fourier dans l'anneau de Chow d'une vari\'{e}t\'{e} ab\'{e}lienne. Algebraic geometry (Tokyo/Kyoto, 1982), 238-260, LNM 1016, Springer, Berlin, 1983.

[2] Beauville A., The action of $\textbf{SL}_{2}$ on abelian varieties. J. Ramanujan Math. Soc., 2010, 25: 253-263.

[3] Beilinson A., Remarks on Grothendieck's standard conjectures. arXiv: 1006.1116v2 [math.AG], 2010.

[4] Deninger C. and Murre J. P., Motivic decomposition of abelian schemes and the Fourier transform. J. Reine Angew. Math. 1991, 422: 201-219.

[5] Friedlander E. M., Algebraic cycles, Chow varieties, and Lawson homology. Compositio Math., 1991, 77: 55-93.

[6] Friedlander E. M. and Gabber O., Cycles spaces and intersection theory. In Topological methods in modern mathematics (Stony Brook, NY, 1991), 325-370.

[7] Friedlander E. M., Haesemeyer and Walker M. E., Techniques, computations, and conjectures for semi-topological K-theory. Math. Ann., 2004, 330: 759-807.

[8] Friedlander E. M. and Lawson, H. B., Jr., Duality relating spaces of algebraic cocycles and cycles. Topology, 1997, 36: 533-565.

[9] Friedlander E. M. and Mazur B., Filtrations on the homology of algebraic varieties. Mem. Amer. Math. Soc., \textbf{110}(529):x+110,1994. With an appendix by Daniel Quillen.

[10] Fu B., Remarks on Hard Lefschetz conjectures on Chow groups. Sci. China Math., 2010, 53: 105-114.

[11] Hu W. and Li L., Lawson homology, morphic cohomology and Chow motives. arXiv: 0711.0383v1 [math.AG], 2007.

[12] Hu W., Lawson homology for abelian varieties. Preprint.

[13] Kimura S. and Vistoli A., Chow rings of infinite symmetric products. Duke Math. J., 1996, 85: 411-430.

[14] Lawson, H. B. Jr., Algebraic cycles and homotopy theory. Ann. of Math., 1989, 129: 253-291.

[15] Murre J. P., On a conjectural filtration on the Chow groups of an algebraic variety I. The general conjectures and some examples. Indag. Math. (N. S.), 1993, 4: 177-188.

[16] Voineagu M., Cylindrical homomorphisms and Lawson homology. J. K-theory, 2010: 1-34.

\end{document}